\theoremstyle{definition}
\newcommand{\ackname}{Acknowledgement}
\newenvironment{acknowledgement}{
\section*{\ackname\hfill\hfill}}{\relax}
\newtheorem{definition}{Definition}[section]
\newtheorem{theorem}[definition]{Theorem}
\newtheorem{lemma}[definition]{Lemma}
\newcommand{\m}{\mathfrak{m}}
\newcommand{\GL}{\mathop{\mathrm{GL}}\nolimits}
\newcommand{\Sym}{\mathop{\mathrm{Sym}}\nolimits}
\newcommand{\Proj}{\mathbb{P}}
\newcommand{\disc}{\mathrm{disc}}
\newcommand{\sli}{\mathrm{sli}}
\newcommand{\Gr}{\mathop{\mathrm{Gr}}\nolimits}
\newcommand{\PP}{\mathop{\mathrm{Proj}}\nolimits}
\newcounter{num}
\newcommand{\Rnum}[1]{\setcounter{num}{#1}\Roman{num}}
\newcommand{\rnum}[1]{\setcounter{num}{#1}\roman{num}}
\DeclareFontFamily{U}{matha}{\hyphenchar\font45}
\DeclareFontShape{U}{matha}{m}{n}{
    <5> <6> <7> <8> <9> <10> gen * matha
    <10.95> matha10 <12> <14.4> <17.28> <20.74> <24.88> matha12
}{}
\DeclareSymbolFont{matha}{U}{matha}{m}{n}
\DeclareMathSymbol{\divides}{3}{matha}{"17}
\DeclareMathSymbol{\notdivides}{3}{matha}{"1F}
\title{Isomorphism types of commutative algebras of rank 7 over 
an arbitrary algebraically closed field of characteristic not 2 or 3}
\author{Naoto ONDA}
\begin{document}
	\maketitle
\begin{abstract}
We classify isomorphism types of unital commutative algebras of rank 7 
over an algebraically closed field of characteristic not 2 or 3 completely. 
\end{abstract}

\section{Introduction}
Let $k$ be an algebraically closed field of characteristic not 2 or 3. 
We say that a $k$-algebra is of rank $n$ if its dimension as a $k$-vector space is $n$. 
In this paper, any $k$-algebra is assumed to be commutative and have a unit. 
In \cite{Poonen07isomorphismtypes}, Bjorn Poonen classified isomorphic types of $k$-algebras of rank up to 6 
and showed that the number of isomorphism types of $k$-algebras of rank $n$ is finite 
if and only if $n\leq 6$. 
In this paper, we complete a classification of $k$-algebras of rank 7. 
We can easily see that any $k$-algebra of finite rank can be decomposed into 
a product of Artinian local $k$-algebras and 
its decomposition is unique up to permutations, 
so we have only to classify Artinian local $k$-algebras of rank 7. 
Let $(A, \m)$ be an Artinian local $k$-algebra. 
The number $d_i = \dim_k(\m^i/\m^{i + 1})$ depends only on the isomorphism type of $A$. 
We define the vector $\vec{d}$ to be $(d_1, d_2, \dots)$. 
In \cite{Yu2015TowardsTC}, Alexandria Yu has classified 
isomorphism types of Artinian local $k$-algebras of rank 7 except the type $\vec{d} = (3, 3)$, 
so we have only to classify $k$-algebras of the remaining type. 
Let $\GL(3)$ be the group of $3\times 3$ regular matrices over $k$, 
$\Sym_2(3)$ be the space of symmetric $3\times 3$ matrices over $k$ 
and $\Gr_3(V)$ be the set of 3-dimensional subspaces of a $k$-vector space $V$. 
There is a one-to-one correspondence 
between the isomorphism types of Artinian local $k$-algebras of type $\vec{d} = (3, 3)$ 
and the orbits under the natural action of $\GL(3)$ on $\Gr_3(\Sym_2(3))$.
For a more general statement, see Lemma \ref{d1d2}.

To describe our classification of orbits, we define the discriminant scheme of an orbit. 
Let $[W]$ be the orbit including an element $W$ of $\Gr_3(\Sym_2(3))$. 
We define the discriminant scheme $\disc[W]$ of $[W]$ to be the closed subscheme of $\PP{k[x,y,z]} = \Proj^2$ 
defined by the homogeneous polynomial
\[
	\det(A_1\cdot x + A_2\cdot y + A_3\cdot z),
\]
where $\{A_1, A_2, A_3\}$ is a basis of $W$. It is well-defined up to projective transformations. 
We say that $[W]$ is nonsingular (resp.\ singular) if $\disc[W]$ is (resp.\ is not) a nonsingular cubic curve. 
Nonsingular orbits are already classified in \cite[Theorem 5.8]{MR3472915}. 
In this paper, we classify the singular orbits. 
Our main result is as follows.
\begin{theorem}
	All schemes which appear as discriminant schemes of singular orbits are as follows.
	\begin{description}
		\item[(\Rnum{1})] The projective plane.
		\item[(\Rnum{2})] A line with multiplicity three.
		\item[(\Rnum{3})] The union of a line with multiplicity two and another line.
		\item[(\Rnum{4})] The union of distinct three lines whose intersection is not one point.
		\item[(\Rnum{5})] The union of a quadratic curve and a line whose intersection is distinct two points.
		\item[(\Rnum{6})] The union of a quadratic curve and a tangent line.
		\item[(\Rnum{7})] An irreducible cubic curve with a cusp.
		\item[(\Rnum{8})] An irreducible cubic curve with a node.
	\end{description}
	There are two orbits each of whose discriminant scheme is the case (\Rnum{1}) 
	(resp.\ (\Rnum{4})). 
	There is only one orbit for each of the other cases.
\end{theorem}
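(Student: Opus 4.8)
The plan is to read the theorem as a classification of \emph{nets of conics} and to refine the discriminant cubic by the rank stratification of the net. For a basis $\{A_1,A_2,A_3\}$ of $W$ set $M(x,y,z)=A_1x+A_2y+A_3z$ and regard each symmetric matrix as a quadratic form in variables $u,v,w$, so that a parameter point $p=(x:y:z)\in\Proj^2$ carries the conic $M(p)$. The generic rank of $M(p)$ is $3$, it drops to $2$ precisely along $C:=\disc[W]$, and it drops to $1$ on the closed subscheme $R\subseteq C$ cut out by the $2\times2$ minors of $M$, which is exactly the locus of double lines in the net. I will show that the pair $(C,R)$, together with the base locus $\mathrm{Bs}(W)\subset\Proj^2$ (the common zeros of all the conics in the net), is a complete invariant of the congruence class, so the classification proceeds case by case according to the projective type of $C$.

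Existence in each case is settled by an explicit representative. Taking $M=\mathrm{diag}(x,y,z)$ and $M=\left(\begin{smallmatrix}0&z&y\\z&0&x\\y&x&0\end{smallmatrix}\right)$ produces two nets, namely $\langle u^2,v^2,w^2\rangle$ and $\langle vw,uw,uv\rangle$, both with determinant a nonzero multiple of $xyz$, so both realize case (\Rnum{4}); the first has $R$ equal to the three vertices of the triangle and empty base locus, while the second has $R=\emptyset$ and three base points, so they are inequivalent and account for the two orbits of (\Rnum{4}). Similarly the nets $\langle u^2,uv,v^2\rangle$ and $\langle u^2,uv,uw\rangle$ both have $\det M\equiv0$ and realize case (\Rnum{1}); for the first $R$ is a smooth conic in $\Proj^2$ (common singular point, fat-point base locus) and for the second $R$ is a single point (common linear factor, base locus a line), giving the two orbits of (\Rnum{1}). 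For each of the remaining cases (\Rnum{2}),(\Rnum{3}),(\Rnum{5})--(\Rnum{8}) I would record a single normal form and compute its determinant directly.

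The heart of the proof is completeness and the exactness of the counts. When $C$ is the whole plane (case (\Rnum{1})) the net is a $\Proj^2$ lying inside the determinantal cubic hypersurface of $\Proj(\Sym_2(3))$; since the maximal linear subspaces of this hypersurface form exactly two rulings---forms with a common kernel vector, and forms with a common linear factor---one obtains \emph{exactly} the two orbits above. In every other case the net contains a nondegenerate form, which I normalize by congruence to the identity (available because $\mathrm{char}\,k\neq2$), so that $W=\langle I,B_1,B_2\rangle$ and the residual symmetry is the orthogonal group acting by conjugation on the pair $(B_1,B_2)$. The projective type of $C$ is then governed by the joint Segre data of $(B_1,B_2)$, and running the classical classification of such pencils pins $M$ down to a single congruence class in each of (\Rnum{2}),(\Rnum{3}),(\Rnum{5})--(\Rnum{8}); the only place a genuine discrete choice survives is (\Rnum{4}), where the vertices of the triangle may or may not carry double lines, producing the second orbit found above.

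The main obstacle, and the one genuinely geometric step, is to rule out three distinct concurrent lines, i.e.\ to show that $C$ cannot have an ordinary triple point. Here I use the inequality $\mathrm{ord}_p(\det M)\ge 3-\mathrm{rank}\,M(p)$. A triple point at $p=(0:0:1)$ forces $\det M$ to be a cubic in $x,y$ alone, so $\det A_3=0$, and the vanishing of its linear part forces the kernel vector $\nu$ of $A_3$ to satisfy $\nu^{\top}M\nu\equiv0$, i.e.\ $\nu$ is a base point of the net. Normalizing $\nu=(0:0:1)$ and $A_3=\mathrm{diag}(B,0)$ with $B$ invertible, one computes $\det M=-(r a^2-2q ab+p b^2)$, where $a,b$ are the two off-diagonal linear forms (which lie in $\langle x,y\rangle$) and $p,q,r$ are the entries of the upper block. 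When the three lines are distinct, $a,b$ are independent, hence $a^2,ab,b^2$ are independent, and the absence of $z$ in $\det M$ forces $p,q,r$ to be $z$-free as well; but then $M$ is entirely $z$-free and $W$ is only two-dimensional---a contradiction (the dependent case $b\propto a$ merely degenerates $C$ to case (\Rnum{3}), not to three distinct lines). The same rigidity computation, read forward, is what shows that each admissible $(C,R)$ leaves no continuous moduli and is realized by a \emph{unique} congruence class, thereby closing the orbit count.
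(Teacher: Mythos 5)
Your strategy (treat $W$ as a net of conics, refine the discriminant $C$ by the rank-one locus $R$ and the base locus, normalize a nondegenerate member to the identity) is genuinely different from the paper's, which restricts $W$ to a well-chosen pencil, invokes Poonen's classification of pencils of conics (Lemma \ref{2subsp}), and then solves explicitly for the third basis matrix in each case. However, as written your argument has real gaps. The most serious one: the uniqueness counts --- exactly one orbit in each of (\Rnum{2}), (\Rnum{3}), (\Rnum{5})--(\Rnum{8}) and exactly two in (\Rnum{4}) --- are never actually proven. You delegate them to ``the classical classification of such pencils,'' but the object to be classified is a net $\langle I, B_1, B_2\rangle$ up to congruence, not a pair $(B_1,B_2)$ up to orthogonal conjugation: a matrix $S$ preserving $W$ need only satisfy $S^\top I S \in W$, not $S^\top I S = I$, and conversely many pairs span the same net, so Segre--Kronecker theory for pencils does not directly yield orbit counts for nets. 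Closing this gap is essentially the entire content of the theorem (it is what the paper's explicit case analysis (\rnum{2})--(\rnum{8}) accomplishes), so your text is a plan rather than a proof. Relatedly, the claim that $(C, R, \mathrm{Bs}(W))$ is a complete invariant is asserted but never established, and it cannot be waved through as a general principle: for nonsingular $C$ it is false (orbits with the same smooth discriminant cubic are distinguished by the additional data $(\mathfrak{L},P)$ of \cite[Theorem 5.8]{MR3472915}), so its validity for singular orbits can only emerge from the very case analysis you skipped.

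Second, your exclusion of three concurrent lines --- the one case you argue in detail --- silently assumes $\mathrm{rank}\,M(p)=2$ at the triple point $p$, since you normalize $A_3=\mathrm{diag}(B,0)$ with $B$ invertible. Your own inequality $\mathrm{ord}_p(\det M)\ge 3-\mathrm{rank}\,M(p)$ permits $\mathrm{rank}\,M(p)=1$, and that case needs a separate argument: there one finds that the upper $2\times 2$ block of $M$ is a pencil with identically vanishing determinant, hence proportional to a single rank-one form, which forces $\det M$ to be a linear form times the square of a linear form --- again not three distinct lines --- but you must say this. (Compare the paper, where concurrency is excluded uniformly in case (\rnum{9}) by reducing to the pencil of type $(1^3)$ and computing $F=-by^2z-y^3$.) The parts of your proposal that are correct and checkable are the existence statements: your representatives for (\Rnum{1}) and (\Rnum{4}) and the invariants separating them ($R$ a smooth conic versus a point; $R$ three points versus empty) do agree with the paper, which instead separates the two (\Rnum{1})-orbits by the slice and the two (\Rnum{4})-orbits by the presence of a rank-one matrix. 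The ``two rulings'' dichotomy you invoke for case (\Rnum{1}) (common kernel vector versus common linear factor) is a correct classical fact about planes inside the symmetric determinantal cubic, but it too is a nontrivial ingredient that should be proved or cited rather than asserted.
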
\noindent
See Theorem $\ref{main}$ for detail. 

In order to distinguish two orbits in the case (\Rnum{1}),  
we consider the ``slices'' of them, 
which appeared in the construction of the bijection of \cite[Theorem 5.8]{MR3472915}. 
If $[W]$ is nonsingular, $\disc[W]$ and $\sli[W]$ are isomorphic; 
it was proved in the construction of the bijection of \cite[Theorem 5.8]{MR3472915}. 
However, they are not always isomorphic if $[W]$ is singular. 
To show every orbit appears in our list, 
we use classification of the orbits of 2-dimensional subspace of a $k$-vector space $\Sym_2(3)$, 
due to \cite{Poonen07isomorphismtypes}.

\section{Main Theorem}\label{secmaintheorem}
Let $k$ be an algebraically closed field of characteristic not 2 or 3. 
In this paper, every $k$-algebra is assumed to be commutative and have a unit. 
We classify Artinian local $k$-algebras of rank 7. Let $(A, \m)$ be an Artinian local $k$-algebra. 
We define $\vec{d} = (d_1, d_2, \dots)$, where $d_i = \dim_k(\m^i/\m^{i + 1})$. 
Let $\GL(n)$ be the group of $n\times n$ regular matrices over $k$, 
$\Sym_2(n)$ be the space of symmetric $n\times n$ matrices over $k$ 
and $\Gr_n(V)$ be the set of $n$-dimensional subspaces of a $k$-vector space $V$. 
We define an action of $\GL(m)$ on $\Gr_n(\Sym_2(m))$ by 
	\[
 	   \begin{array}{c@{\,\,}c@{\,\,}c}
	       \GL(m)\times\Gr_n(\Sym_2(m))&\longrightarrow&\Gr_n(\Sym_2(m)),\\
	       \rotatebox{90}{$\in$}&&\rotatebox{90}{$\in$}\\
	       (M, W)&\longmapsto&M^\top\cdot W\cdot M
	    \end{array}
	\]
	where $M^\top$ is the transpose of $M$. 
Here is an important lemma.

\begin{lemma}[{\cite[Lemma 1.2 (3)]{Poonen07isomorphismtypes}}]\label{d1d2}
	There is a canonical bijection 
	between isomorphism types of Artinian $k$-algebras of the type $\vec{d} = (d_1, d_2)$ 
	and orbits of the above action on $\Gr_{d_2}(\Sym_2(d_1))$. 
\end{lemma}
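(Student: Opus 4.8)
The plan is to make the algebra structure of type $\vec{d}=(d_1,d_2)$ fully explicit and then repackage it as a subspace of symmetric forms. First I would observe that such an algebra satisfies $\m^3=0$ and, since $k$ is algebraically closed, has residue field $k$, so that $A=k\oplus\m$ as $k$-vector spaces with $V:=\m/\m^2$ of dimension $d_1$ and $W:=\m^2$ of dimension $d_2$. Because $\m\cdot\m^2\subseteq\m^3=0$, the multiplication $\m\times\m\to\m^2$ depends only on residues modulo $\m^2$, so it factors through a symmetric bilinear surjection; equivalently it is a surjective linear map $\mu\colon\Sym^2 V\to W$. Choosing a linear lift of $V$ into $\m$, one sees that every structure constant of $A$ is recovered from $\mu$, and that associativity is the identity which is manifestly symmetric in its three arguments; conversely any $V$, $W$ of the right dimensions together with a surjective $\mu$ define such an algebra.

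Next I would match morphisms. Any $k$-algebra isomorphism $\phi\colon A\to A'$ preserves the maximal ideal and its square, hence induces an isomorphism $g\colon V\to V'$ on $\m/\m^2$ and an isomorphism $h\colon W\to W'$ on $\m^2$. Writing $\phi$ on a lift of $V$ as $g$ together with a ``shear'' $s\colon V\to\m'^2$, multiplicativity forces the compatibility $h\circ\mu=\mu'\circ(g\times g)$, whereas $s$ is unconstrained and irrelevant to invertibility. Thus isomorphism classes of these algebras are in bijection with isomorphism classes of triples $(V,W,\mu)$, a morphism of triples being a compatible pair $(g,h)$.

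Finally I would dualize. Since $\mathrm{char}\,k\neq2$, the transpose $\mu^{*}\colon W^{*}\to(\Sym^2 V)^{*}\cong\Sym^2(V^{*})$ is injective exactly when $\mu$ is surjective, and its image is a $d_2$-dimensional subspace of the space $\Sym^2(V^{*})$ of symmetric bilinear forms on $V$. Fixing $V=k^{d_1}$ identifies this ambient space with $\Sym_2(d_1)$, so each triple determines a point of $\Gr_{d_2}(\Sym_2(d_1))$ that depends only on $W$ and $\mu$ and not on the chosen identification of $W$; this is precisely why $h$ drops out. Every $d_2$-dimensional subspace $U$ occurs, via $W=U^{*}$ with $\mu$ the evaluation pairing, so the correspondence is surjective. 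Translating $h\circ\mu=\mu'\circ(g\times g)$ through the duality shows that changing $g$ sends a form matrix $S$ to $M^{\top}SM$, which is exactly the stated action, so isomorphism classes of algebras correspond canonically to $\GL(d_1)$-orbits on $\Gr_{d_2}(\Sym_2(d_1))$.

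I expect the main obstacle to be the morphism bookkeeping: one must verify that the only freedom in $\phi$ beyond $(g,h)$ is the shear $s$ and that it genuinely does not affect the resulting orbit, and one must keep the duality and transpose conventions straight so that the induced action emerges as $M^{\top}WM$ rather than an inverse-transpose variant. The hypothesis $\mathrm{char}\,k\neq2$ enters only in the identification of symmetric bilinear forms with $\Sym^2$ of the dual, and should be invoked explicitly at that step.
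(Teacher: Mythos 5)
Your proof is correct. Note that the paper does not prove this lemma at all---it is quoted from Poonen's work as \cite[Lemma 1.2 (3)]{Poonen07isomorphismtypes}---and your argument (using $\m^3=0$ to factor multiplication through a surjection $\Sym^2(\m/\m^2)\to\m^2$, checking that isomorphisms amount to compatible pairs $(g,h)$ with the shear irrelevant, then dualizing so that $h$ drops out and $g$ acts by $S\mapsto M^\top S M$) is essentially the standard proof given in that reference, so there is nothing to compare against within this paper itself.
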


In \cite{Yu2015TowardsTC}, Alexandria Yu has classified isomorphism types of 
Artinian local $k$-algebras of rank 7 except the type $\vec{d} = (3, 3)$, 
so we have only to classify the orbits of the action on $\Gr_3(\Sym_2(3))$. 
Let $[W]$ denote the orbit including an element $W$ of $\Gr_3(\Sym_2(3))$. 
We define the discriminant scheme of $[W]$ to be the closed subscheme $\disc[W]$ of $\PP{k[x,y,z]} = \Proj^2$ 
defined by the homogeneous polynomial
\[
	\det(A_1\cdot x + A_2\cdot y + A_3\cdot z),
\]
where $\{A_1, A_2, A_3\}$ is a basis of $W$. 
It is well-defined up to projective transformations. 
We say that $[W]$ is nonsingular (resp.\ singular) 
if $\disc[W]$ is (resp.\ is not) a nonsingular cubic curve. 
According to \cite[Theorem 5.8]{MR3472915}, there is a bijection 
between nonsingular orbits and isomorphism classes of triples $(C, \mathfrak{L}, P)$, 
where $C$ is a nonsingular cubic curve, 
$\mathfrak{L}$ is a degree 3 line bundle over $C$ 
and $P$ is a 2-torsion point of ${\mathop{\mathrm{Pic}}\nolimits}^0(C)$. 
Therefore, it suffices to classify the singular orbits. 
In fact, we have the following theorem.

\begin{theorem}\label{main}
	The number of singular orbits is ten. We list a basis of a representative of each orbit in Table \ref{3table}.
	\begin{table}[htb]
		\centering
		\begin{tabular}{c|c|c}
			disc$[W]$ & a representative of $[W]$ & the corresponding $k$-algebra $k[x,y,z]/I$ \\ \hhline{ =|=|= }
			& & \\[-10pt]
			\Rnum{1}& $\left(\begin{bmatrix}
				0 & 0 & 0\\
				0 & 0 & 0\\
				0 & 0 & 1
			\end{bmatrix}, 
			\begin{bmatrix}
				0 & 0 & 0\\
				0 & 1 & 0\\
				0 & 0 & 0
			\end{bmatrix}, 
			\begin{bmatrix}
				0 & 0 & 0\\
				0 & 0 & 1\\
				0 & 1 & 0
			\end{bmatrix}\right)$ & $(x^2, xy, xz, y^3, y^2z, yz^2, z^3)$ \\ [20pt]
			 & $\left(\begin{bmatrix}
				0 & 0 & 1\\
				0 & 0 & 0\\
				1 & 0 & 0
			\end{bmatrix}, 
			\begin{bmatrix}
				0 & 0 & 0\\
				0 & 0 & 1\\
				0 & 1 & 0
			\end{bmatrix}, 
			\begin{bmatrix}
				0 & 0 & 0\\
				0 & 0 & 0\\
				0 & 0 & 1
			\end{bmatrix}\right)$ & $(x^2, xy, y^2, z^3)$ \\ [20pt]\hline
			& & \\[-10pt]
			\Rnum{2}& $\left(\begin{bmatrix}
				1 & 0 & 0\\
				0 & 0 & 0\\
				0 & 0 & 0
			\end{bmatrix}, 
			\begin{bmatrix}
				0 & 0 & 1\\
				0 & 1 & 0\\
				1 & 0 & 0
			\end{bmatrix}, 
			\begin{bmatrix}
				0 & 1 & 0\\
				1 & 0 & 0\\
				0 & 0 & 0
			\end{bmatrix}\right)$ & $(xz - y^2, yz, z^2, x^3, x^2y, x^2z)$ \\ [20pt]\hline
			& & \\[-10pt]
			\Rnum{3}& $\left(\begin{bmatrix}
				1 & 0 & 0\\
				0 & 0 & 0\\
				0 & 0 & 0
			\end{bmatrix}, 
			\begin{bmatrix}
				0 & 0 & 1\\
				0 & 1 & 0\\
				1 & 0 & 0
			\end{bmatrix}, 
			\begin{bmatrix}
				0 & 0 & 0\\
				0 & 1 & 0\\
				0 & 0 & 0
			\end{bmatrix}\right)$ & $(xy, yz, z^2, x^3, x^2z, y^3)$ \\ [20pt]\hline
			& & \\[-10pt]
			\Rnum{4}& $\left(\begin{bmatrix}
				1 & 0 & 0\\
				0 & 0 & 0\\
				0 & 0 & 0
			\end{bmatrix}, 
			\begin{bmatrix}
				0 & 0 & 0\\
				0 & 1 & 0\\
				0 & 0 & 0
			\end{bmatrix}, 
			\begin{bmatrix}
				0 & 0 & 0\\
				0 & 0 & 0\\
				0 & 0 & 1
			\end{bmatrix}\right)$ & $(xy, xz, yz, x^3, y^3, z^3)$\\ [20pt]
			 & $\left(\begin{bmatrix}
				0 & 0 & 0\\
				0 & 1 & 0\\
				0 & 0 & -1
			\end{bmatrix}, 
			\begin{bmatrix}
				1 & 0 & 0\\
				0 & 0 & 0\\
				0 & 0 & -1
			\end{bmatrix}, 
			\begin{bmatrix}
				2 & 1 & 1\\
				1 & 0 & 1\\
				1 & 1 & 0
			\end{bmatrix}\right)$ & $(y^2 + z^2 - x^2 + xz, xy - xz, xz - yz, x^2y, xz^2)$ \\ [20pt]\hline
			& & \\[-10pt]
			\Rnum{5} & $\left(\begin{bmatrix}
				1 & 0 & 0\\
				0 & 0 & 0\\
				0 & 0 & 0
			\end{bmatrix}, 
			\begin{bmatrix}
				0 & 0 & 1\\
				0 & 1 & 0\\
				1 & 0 & 0
			\end{bmatrix}, 
			\begin{bmatrix}
				0 & 0 & 0\\
				0 & 0 & 0\\
				0 & 0 & 1
			\end{bmatrix}\right)$ & $(xy, y^2 - xz, yz, x^3, z^3)$ \\ [20pt]\hline
			& & \\[-10pt]
			\Rnum{6} & $\left(\begin{bmatrix}
				0 & 0 & 0\\
				0 & 1 & 0\\
				0 & 0 & 1
			\end{bmatrix}, 
			\begin{bmatrix}
				1 & 0 & 0\\
				0 & 0 & 0\\
				0 & 0 & 1
			\end{bmatrix}, 
			\begin{bmatrix}
				2 & 1 & 0\\
				1 & 0 & 0\\
				0 & 0 & 0
			\end{bmatrix}\right)$ & $(xz, yz, x^2 - 2xy + y^2 - z^2, x^3, y^3)$ \\ [20pt]\hline
			& & \\[-10pt]
			\Rnum{7} & $\left(\begin{bmatrix}\label{R7}
				1 & 0 & 0\\
				0 & 0 & 0\\
				0 & 0 & 0
			\end{bmatrix}, 
			\begin{bmatrix}
				0 & 0 & 1\\
				0 & 1 & 0\\
				1 & 0 & 0
			\end{bmatrix}, 
			\begin{bmatrix}
				0 & 0 & 0\\
				0 & 0 & 1\\
				0 & 1 & 0
			\end{bmatrix}\right)$ & $(xy, y^2 - xz, z^2, x^3)$ \\ [20pt]\hline
			& & \\[-10pt]
			\Rnum{8} & $\left(\begin{bmatrix}
				1 & 0 & 0\\
				0 & 0 & 0\\
				0 & 0 & 0
			\end{bmatrix}, 
			\begin{bmatrix}
				0 & 0 & 1\\
				0 & 1 & 0\\
				1 & 0 & 0
			\end{bmatrix}, 
			\begin{bmatrix}
				0 & 1 & 0\\
				1 & 0 & 0\\
				0 & 0 & 1
			\end{bmatrix}\right)$ & $(xy - z^2, y^2 - xz, yz, x^3)$
		\end{tabular}
		\caption{All orbits of $\GL(3)$-action to $\Gr_3(\Sym_2(3))$}\label{3table}
	\end{table}
	Corresponding discriminant schemes are as follows.
	\begin{description}
		\item[(\Rnum{1})] The projective plane.
		\item[(\Rnum{2})] A line with multiplicity three.
		\item[(\Rnum{3})] The union of a line with multiplicity two and another line.
		\item[(\Rnum{4})] The union of distinct three lines whose intersection is not one point.
		\item[(\Rnum{5})] The union of a quadratic curve and a line whose intersection is distinct two points.
		\item[(\Rnum{6})] The union of a quadratic curve and a tangent line.
		\item[(\Rnum{7})] An irreducible cubic curve with a cusp.
		\item[(\Rnum{8})] An irreducible cubic curve with a node.
	\end{description}
\end{theorem}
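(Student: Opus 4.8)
The plan is to treat $\disc[W]$ as a projective invariant of the orbit and to argue in three movements: verify the ten candidate representatives, prove the list is exhaustive, and separate orbits that share a discriminant scheme. To begin, I would check each row of Table \ref{3table} by direct computation: expanding $\det(A_1 x + A_2 y + A_3 z)$ for the listed basis $\{A_1, A_2, A_3\}$ exhibits the cubic asserted in (I)--(VIII), and the matching ideal $I$ together with the type $\vec d = (3,3)$ follow from Lemma \ref{d1d2}. This settles existence and records the discriminant type of every listed orbit, leaving exhaustiveness and pairwise non-congruence.

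For exhaustiveness I would first recall that a ternary cubic which is not a nonsingular curve is, up to projective transformation, one of exactly nine types: the eight appearing in the theorem together with three distinct concurrent lines. The crux is therefore to rule out the last type, and for this I would set up a dictionary between the singularities of $\disc[W]$ and the rank stratification of the net. Writing $A(p) = A_1 x + A_2 y + A_3 z$ and using $\nabla \det(A) = (\mathrm{tr}(\mathrm{adj}(A)A_i))_i$, the curve $\disc[W]$ is singular at $p$ exactly when $\mathrm{adj}(A(p)) \perp W$ for the trace form; this holds automatically when $A(p)$ has rank $\le 1$, and for rank $2$ it means the vertex of the line pair $A(p)$ is a base point of the net. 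A local computation at a rank-$1$ point, normalizing $A(p_0) = \mathrm{diag}(1,0,0)$, shows that the leading form of $\det$ in transverse coordinates is the determinant of the transverse pencil of symmetric $2\times 2$ matrices: this is a binary quadratic, so the point has multiplicity $2$ unless that pencil is everywhere singular, in which case the leading form degenerates to $(\text{linear})^2\cdot(\text{linear})$. In every case the tangent cone at a singular point is either of degree $2$ or non-reduced, so $\disc[W]$ can never have an ordinary triple point with three distinct tangents; the ``three concurrent lines'' type is thereby excluded. I expect this non-realizability argument to be the main obstacle, since it is the one place where the symmetric structure of the net is genuinely used.

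To match an arbitrary singular net to a row of the table, I would choose a pencil $U \subset W$, normalize it by Poonen's classification of $2$-dimensional subspaces of $\Sym_2(3)$ (equivalently, of rank-$6$ algebras of type $(3,2)$) \cite{Poonen07isomorphismtypes}, and then classify the remaining generator $A_3$ modulo $U$ under the stabilizer $\mathrm{Stab}(U) \subset \GL(3)$. Running through the finitely many pencil types and, for each, the orbits of $A_3$, yields a finite (a priori redundant) list of nets; I would then collapse redundancies and identify the survivors with the rows of Table \ref{3table} using the discriminant type and, where necessary, the slice. A net contains many pencils, so the same orbit may arise from several pencil types, but this only affects bookkeeping: the invariants already computed decide which intermediate cases coincide.

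It remains to separate orbits. Those with distinct discriminant types are distinguished by $\disc[W]$ itself. For the two orbits in case (I) and the two in case (IV), whose discriminant schemes agree, I would use the slice $\sli[W]$ from the construction of \cite[Theorem 5.8]{MR3472915}: although $\sli[W] \cong \disc[W]$ for nonsingular orbits, the two schemes need not coincide here, and computing $\sli[W]$ for the four representatives should separate the two orbits in (I) and the two in (IV). As a consistency check in case (I) (zero discriminant), the two nets are already distinguished by the locus of rank-$1$ members, which is a conic for one representative and a single point for the other. Tallying the orbits then gives two in (I), two in (IV), and one in each remaining case, for ten singular orbits in total.
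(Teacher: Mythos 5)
Your overall strategy --- verify the table, prove exhaustiveness by normalizing a pencil $U\subset W$ via Poonen's classification of $\Gr_2(\Sym_2(3))$, and separate orbits with equal discriminants by finer invariants --- is the paper's strategy, and your tangent-cone exclusion of three concurrent lines is a genuinely different idea from the paper's (which kills that case by direct computation). However, two of your steps fail or are incomplete as written. The concrete failure is the separation of the two orbits in case (\Rnum{4}): the slice does \emph{not} distinguish them. For the diagonal representative one gets $\sli[W]:\det\mathrm{diag}(x,y,z)=xyz=0$, while for the second representative the slice matrices are
\[
M_1=\begin{bmatrix}0&1&2\\0&0&1\\0&0&1\end{bmatrix},\qquad
M_2=\begin{bmatrix}0&0&1\\1&0&0\\0&0&1\end{bmatrix},\qquad
M_3=\begin{bmatrix}0&0&1\\0&0&1\\-1&-1&0\end{bmatrix},
\]
and $\det(xM_1+yM_2+zM_3)=-(x+y)(y+z)(z+x)$. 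Both slices are triangles of three non-concurrent lines, hence projectively equivalent, so $\sli$ is blind to exactly the distinction you need. The rank invariant you mention only as a ``consistency check'' for case (\Rnum{1}) is what is actually required here: the paper separates the two orbits of (\Rnum{4}) by noting that the first representative contains a rank-one matrix, while a short minor computation shows the second contains none.

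Your exclusion of the concurrent-lines type is also incomplete at its crux. You establish the dichotomy ``multiplicity two, or non-reduced tangent cone'' only at points where $A(p_0)$ has rank one, and then assert it ``in every case.'' But a multiplicity-three point of $\disc[W]$ can perfectly well be a rank-two point of the net --- this happens, for instance, at the generic point of the triple line in case (\Rnum{2}) --- so that case cannot be waved away. It needs its own computation: normalizing $A(p_0)=\mathrm{diag}(1,1,0)$ and writing $B=yA_2+zA_3$, multiplicity three forces $B_{33}\equiv 0$ and $B_{13}^2+B_{23}^2\equiv 0$, i.e.\ $B_{13}\equiv\pm iB_{23}$, whence $\det B=B_{23}^2\bigl(\pm 2iB_{12}+B_{22}-B_{11}\bigr)$ is again non-reduced; so your conclusion is true, but half of the argument supporting it is missing. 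Finally, your third movement (``classify $A_3$ modulo $\mathrm{Stab}(U)$, then collapse redundancies'') is a plan rather than a proof: that step is where essentially all of the paper's labor lies (its cases (\rnum{1})--(\rnum{9}), with explicit group elements moving each normalized family into Table \ref{3table}), and without carrying it out you have not shown, for example, that the cuspidal and nodal types each consist of a single orbit.
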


To prove that they are different orbits with one another, we define slices of them.

\begin{definition}[\cite{MR3472915}]
	Suppose that $W$ is spanned by three $3\times 3$ matrices $(a^i_{jk})_{j,k}$ $(i=1,2,3)$. 
	We define the closed subscheme sli$[W]$ of $\PP{k[x,y,z]}$ by the equation 
	\[
		\det\left[\begin{bmatrix}
				a^1_{11} & a^2_{11} & a^3_{11}\\
				a^1_{21} & a^2_{21} & a^3_{21}\\
				a^1_{31} & a^2_{31} & a^3_{31}
			\end{bmatrix}x + \begin{bmatrix}
				a^1_{12} & a^2_{12} & a^3_{12}\\
				a^1_{22} & a^2_{22} & a^3_{22}\\
				a^1_{32} & a^2_{32} & a^3_{32}
			\end{bmatrix}y + \begin{bmatrix}
				a^1_{13} & a^2_{13} & a^3_{13}\\
				a^1_{23} & a^2_{23} & a^3_{23}\\
				a^1_{33} & a^2_{33} & a^3_{33}
			\end{bmatrix}z
		\right].
	\]
	We can easily see that sli$[W]$ is well-defined up to projective transformations. 
	We call it the slice of $[W]$. 
\end{definition}

To show that any orbit is one of the orbits listed in Table \ref{3table}, 
we use the following lemma.

\begin{lemma}[{\cite{Poonen07isomorphismtypes}}]\label{2subsp}
	All orbits of $\Gr_2(\Sym_2(3))$ under the action of $\GL(3)$ are in Table \ref{2table}. 
	The discriminant scheme of each orbit, which is a closed subscheme of $\Proj^1$ 
	and is well-defined up to projective transformations, is defined in the same way as above. 
	\begin{table}[htb]
		\centering
		\begin{tabular}{c|c}
		disc$[W]$ & a representative of $[W]$\\ \hhline{ = | =}
			& \\[-10pt]
		$\Proj^1$ & $\left(\begin{bmatrix}
				0 & 0 & 0\\
				0 & 0 & 0\\
				0 & 0 & 1
			\end{bmatrix}, 
			\begin{bmatrix}
				0 & 0 & 0\\
				0 & 1 & 0\\
				0 & 0 & 0
			\end{bmatrix}\right)$ \\[20pt]
		 & $\left(\begin{bmatrix}
				0 & 0 & 0\\
				0 & 0 & 0\\
				0 & 0 & 1
			\end{bmatrix}, 
			\begin{bmatrix}
				0 & 0 & 0\\
				0 & 0 & 1\\
				0 & 1 & 0
			\end{bmatrix}\right)$ \\[20pt]
		 & $\left(\begin{bmatrix}
				0 & 0 & 1\\
				0 & 0 & 0\\
				1 & 0 & 0
			\end{bmatrix}, 
			\begin{bmatrix}
				0 & 0 & 0\\
				0 & 0 & 1\\
				0 & 1 & 0
			\end{bmatrix}\right)$\\ [20pt]\hline
			& \\[-10pt]
		($1^3$) & $\left(\begin{bmatrix}
				1 & 0 & 0\\
				0 & 0 & 0\\
				0 & 0 & 0
			\end{bmatrix}, 
			\begin{bmatrix}
				0 & 0 & 1\\
				0 & 1 & 0\\
				1 & 0 & 0
			\end{bmatrix}\right)$ \\[20pt] \hline
			& \\[-10pt]
		($1^2 1$) & $\left(\begin{bmatrix}
				1 & 0 & 0\\
				0 & -1 & 0\\
				0 & 0 & 0
			\end{bmatrix}, 
			\begin{bmatrix}
				1 & 0 & 1\\
				0 & 0 & 1\\
				1 & 1 & 0
			\end{bmatrix}\right)$ \\[20pt]
			 & $\left(\begin{bmatrix}
				0 & 0 & 0\\
				0 & 0 & 1\\
				0 & 1 & 0
			\end{bmatrix}, 
			\begin{bmatrix}
				1 & 0 & 0\\
				0 & 0 & 0\\
				0 & 0 & 0
			\end{bmatrix}\right)$\\[20pt]
			 & $\left(\begin{bmatrix}
				0 & 0 & 0\\
				0 & 1 & 0\\
				0 & 0 & 1
			\end{bmatrix}, 
			\begin{bmatrix}
				0 & 0 & 1\\
				0 & 0 & 0\\
				1 & 0 & 1
			\end{bmatrix}\right)$\\[20pt] \hline
			& \\[-10pt]
			(111) & $\left(\begin{bmatrix}
				0 & 0 & 0\\
				0 & 1 & 0\\
				0 & 0 & 1
			\end{bmatrix}, 
			\begin{bmatrix}
				1 & 0 & 0\\
				0 & 0 & 0\\
				0 & 0 & 1
			\end{bmatrix}\right)$ 
		\end{tabular}
		\caption{All orbits of $\GL(3)$-action to $\Gr_2(\Sym_2(3))$}\label{2table}
	\end{table}
\end{lemma}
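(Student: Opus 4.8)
The plan is to read a two-dimensional $W\subseteq\Sym_2(3)$ as a \emph{pencil of ternary quadratic forms}: a basis $\{A_1,A_2\}$ determines the family $\lambda A_1+\mu A_2$, the $\GL(3)$-action is simultaneous congruence $A\mapsto M^\top A M$, and changing the basis of $W$ amounts to a $\GL(2)$-reparametrization of the pencil. Thus an orbit in $\Gr_2(\Sym_2(3))$ is exactly a pencil of conics taken up to congruence and projective reparametrization; this is precisely the datum classified by the Kronecker--Weierstrass theory of symmetric pencils, and $\disc$, computed from $\det(A_1x+A_2y)$, is the binary cubic whose roots (with multiplicity) mark the singular members. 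First I would split into two cases by the primary invariant: the pencil is \emph{regular} if $\disc\not\equiv 0$ (some member is a nonsingular conic) and \emph{singular} if $\disc\equiv 0$ (every member is singular).

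For the regular case, after a $\GL(2)$-reparametrization I may assume $A_2$ is nonsingular, and since $k$ is algebraically closed every nonsingular symmetric form is congruent to the identity, so I normalize $A_2=I$. The congruences preserving $I$ form the orthogonal group, and the remaining problem is to classify the single symmetric matrix $A_1$ up to $A_1\mapsto M^\top A_1 M$ with $M^\top M=I$, modulo the reparametrization $A_1\mapsto\alpha A_1+\beta I$. This is the classification of a self-adjoint operator for a nondegenerate symmetric form: over an algebraically closed field every Jordan type is realizable and the \emph{Segre symbol} (the elementary-divisor degrees grouped by eigenvalue) is a complete invariant. Enumerating the Segre symbols for size $3$ and discarding the scalar case $A_1\propto I$ (which is not a genuine pencil) yields exactly five regular orbits. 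Here one must note that $\disc$ is the characteristic polynomial, so its root scheme records only \emph{algebraic} multiplicities; orbits sharing a discriminant are separated by the finer datum of the rank of the degenerate member at each singular point, i.e.\ the \emph{geometric} multiplicity, which is exactly what the Segre symbol encodes.

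For the singular case $\disc\equiv 0$, I would invoke the Kronecker normal form for singular symmetric pencils: one splits off the common radical of the pencil as blocks of minimal index $0$ (a common isotropic kernel direction), leaving either a regular symmetric pencil of smaller size or an indecomposable singular symmetric block. For size $3$ the bookkeeping is short: either there is a one-dimensional common radical and the pencil descends to a regular $2\times 2$ symmetric pencil, whose binary-quadratic discriminant has distinct or a double root (two subcases), or the pencil is the single indecomposable symmetric singular block of size $3$ (minimal index $1$, no common radical). This produces precisely the three orbits listed with $\disc[W]=\Proj^1$, giving eight orbits in all.

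Finally, for each orbit I would exhibit the representative recorded in Table \ref{2table} and compute $\det(A_1x+A_2y)$ directly to confirm the entry and to verify that the eight classes are pairwise distinct. The main obstacle is the normal-form input itself: one needs the \emph{symmetric} Kronecker--Weierstrass theorem (reduction under congruence, not merely equivalence of pencils), together with a clean argument that the enumerated list is complete and non-redundant --- in particular that the singular pencils yield no further classes and that orbits with the same discriminant (the two triple-root types and the two double-plus-simple types) are genuinely different, which is handled by the rank-of-degenerate-member invariant above.
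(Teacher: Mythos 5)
The paper itself never proves this lemma: it is imported as a citation to Poonen, so there is no internal proof to compare against, and your proposal is the natural way to actually supply one. Your route --- reading $W$ as a pencil of conics, splitting into regular ($\disc\not\equiv 0$) and singular ($\disc\equiv 0$) pencils, classifying regular pencils by Segre symbol and singular ones by symmetric Kronecker reduction (a common radical plus a regular $2\times 2$ pencil, or the indecomposable singular $3\times 3$ block) --- is the standard argument, and its structure is sound: it yields $5+3=8$ orbits, matching the eight representatives in Table \ref{2table}, with the rank of the degenerate member separating orbits that share a discriminant.

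Two caveats, one of which you should genuinely fix. First, after normalizing $A_2=I$, the residual reparametrization group is \emph{not} just the affine substitutions $A_1\mapsto\alpha A_1+\beta I$: one is also free to re-choose which nonsingular member of the pencil gets normalized to $I$, and this enlarges the residual action to the full M\"obius action of $\mathrm{PGL}(2)$ on the roots of $\disc$. This matters: under affine substitutions alone, a pencil of type $[1,1,1]$ retains a continuous eigenvalue-ratio invariant and your count of five regular orbits would fail; it is exactly the $3$-transitivity of $\mathrm{PGL}(2)$ on $\Proj^1$ that makes the Segre symbol alone a complete invariant in size $3$. Second, your bookkeeping --- two orbits with triple-root discriminant and two with a double plus a simple root --- is correct, and it disagrees with Table \ref{2table} as printed: for the first representative listed under $(1^2 1)$ one computes $\det(A_1x+A_2y)=-y^3$, a triple root, and its degenerate member $\mathrm{diag}(1,-1,0)$ has rank $2$, so it is the Segre type $[3]$ orbit and belongs under $(1^3)$; the entry printed under $(1^3)$, whose degenerate member has rank $1$, is type $[(2,1)]$. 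Thus the final ``confirm each entry'' step of your plan would rightly fail for that row: the eight representatives do form a complete and irredundant set of orbit representatives, but the discriminant column of the table misgroups that one entry, and your proof, carried out, establishes the corrected statement.
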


For example, let $W$ be an element of $\Gr_3(\Sym_2(3))$ 
whose discriminant scheme is a cubic curve with a cusp. 
There exists a line $H$ in $\Proj^2$ such that the intersection of $H$ and $\disc[W]$ 
is one point with multiplicity three. 
By changing a basis of $W$, which corresponds to a projective transformation of $\PP k[x,y,z]$, 
we may assume that the line $H$ is defined by $z = 0$. 
Therefore, the space $W$ contains a 2-dimensional subspace $U$ such that $U$ belongs to the orbit 
of case $(1^3)$ in Table \ref{2table}. 
Moving $W$ by an appropriate $3\times3$ matrix, we may assume that $U$ is spanned by 
\[
	\begin{bmatrix}
				1 & 0 & 0\\
				0 & 0 & 0\\
				0 & 0 & 0
			\end{bmatrix}\text{ and }
			\begin{bmatrix}
				0 & 0 & 1\\
				0 & 1 & 0\\
				1 & 0 & 0
			\end{bmatrix}
\]
and thus $W$ contains them.

\begin{proof}[Proof of Theorem $\ref{main}$]
	First we show that the orbits listed in Table \ref{3table} are mutually different.  
	While the slice of the first case of (\Rnum{1}) in Theorem \ref{main} is also $\Proj^2$, 
	that of the second is a line with multiplicity three, 
	so we see that the first is not the same as the second. 
	While the first case of (\Rnum{4}) contains a rank one matrix, the second does not. 
	Therefore, the orbits listed in Table \ref{3table} are mutually different. 
	
	Next we show that every singular orbit is equal to one of the orbits listed in Table \ref{3table}. 
	Suppose that $[W]$ is an orbit and $W$ is a representative.
	\begin{description}
		\item[(\rnum{1})] Suppose that disc$[W]$ is the projective plane. 
			We may assume that $W$ contains a 2-dimensional subspace in the case $(\Proj^1)$ in Table \ref{2table}. 
			Thus we can take 
			\begin{align}
				&\left(\begin{bmatrix}  \label{1-1}
					0 & 0 & 0\\
					0 & 0 & 0\\
					0 & 0 & 1
				\end{bmatrix}, 
				\begin{bmatrix}
					0 & 0 & 0\\
					0 & 1 & 0\\
					0 & 0 & 0
				\end{bmatrix}, 
				\begin{bmatrix}
					a & b & c\\
					b & 0 & d\\
					c & d & 0
				\end{bmatrix}\right),\\
				&\left(\begin{bmatrix} \notag
					0 & 0 & 0\\
					0 & 0 & 0\\
					0 & 0 & 1
				\end{bmatrix}, 
				\begin{bmatrix}
					0 & 0 & 0\\
					0 & 0 & 1\\
					0 & 1 & 0
				\end{bmatrix}, 
				\begin{bmatrix}
					a & b & c\\
					b & d & 0\\
					c & 0 & 0
				\end{bmatrix}\right),\\
				\text{or} &\left(\begin{bmatrix} \notag
					0 & 0 & 1\\
					0 & 0 & 0\\
					1 & 0 & 0
				\end{bmatrix}, 
				\begin{bmatrix}
					0 & 0 & 0\\
					0 & 0 & 1\\
					0 & 1 & 0
				\end{bmatrix}, 
				\begin{bmatrix}
					a & b & 0\\
					b & c & 0\\
					0 & 0 & d
				\end{bmatrix}\right)
			\end{align}
			as a basis of $W$, where $(a,b,c,d)\neq (0,0,0,0)$. In the case (\ref{1-1}), it must satisfy $axyz - b^2xz^2 - cyz^2 = 0$, 
			so we may assume $(a,b,c,d) = (0,0,0,1)$. This is the first case of (\Rnum{1}) in Table \ref{3table}. 
			We can treat the other cases in the same way.
		\item[(\rnum{2})] Suppose that $\disc[W]$ is a line with multiplicity three. 
			Since there exists a line $H$ in $\Proj^2$ 
			such that the intersection of $H$ and $\disc[W]$ is a point with multiplicity three, 
			we may assume that $W$ contains a 2-dimensional subspace of the case $(1^3)$ in Table \ref{2table}. 
			Thus we can take 
			\[
				\left(\begin{bmatrix}
					1 & 0 & 0\\
					0 & 0 & 0\\
					0 & 0 & 0
				\end{bmatrix}, 
				\begin{bmatrix}
					0 & 0 & 1\\
					0 & 1 & 0\\
					1 & 0 & 0
				\end{bmatrix}, 
				\begin{bmatrix}
					0 & a & 0\\
					a & b & c\\
					0 & c & d
				\end{bmatrix}\right)
			\]
			as a basis of $W$, where $(a,b,c,d) \neq (0,0,0,0)$.  
			Let $G = z((bd - c^2)z + dy)$ and $J = - a^2dz^3 + 2acyz^2 - by^2z - y^3$. 
			Since the defining polynomial of $\disc[W]$ is 
			$F = xG + J$ 
			and $F$ is reducible, $G$ and $J$ have a common factor if $G$ is not 0. 
			If $G$ is not 0, we have  
			$J = ((bd - c^2)z + dy)K$ for some quadratic polynomial $K$ in $k[y,z]$ 
			whose coefficient of $y^2$ is nonzero 
			and $F = ((bd - c^2)z + dy)(xz + K)$. This contradicts the assumption that 
			$F$ can be written as a product of three linear polynomials. 
			Thus $G$ must be 0 and we have $d = c = 0$. 
			Therefore, we have $F = - by^2z - y^3 = L^3$ for some linear polynomial $L$ 
			and we may assume $(a,b,c,d) = (1,0,0,0)$. This is the case (\Rnum{2}) in Table \ref{3table}. 
		\item[(\rnum{3})] Suppose that $\disc[W]$ is the union of a line with multiplicity two and another line. 
			As in the case (\rnum{2}), we may assume that $W$ is spanned by 
			\[
				\left(\begin{bmatrix}
					1 & 0 & 0\\
					0 & 0 & 0\\
					0 & 0 & 0
				\end{bmatrix}, 
				\begin{bmatrix}
					0 & 0 & 1\\
					0 & 1 & 0\\
					1 & 0 & 0
				\end{bmatrix}, 
				\begin{bmatrix}
					0 & a & 0\\
					a & b & 0\\
					0 & 0 & 0
				\end{bmatrix}\right),
			\]
			where $(a,b) \neq (0,0)$ and the defining polynomial of $\disc[W]$ is $F = - by^2z - y^3 = H^2L$ 
			for some distinct linear polynomials $H$ and $L$. Thus we have $b \neq 0$ and we may take 			\[
				\left(\begin{bmatrix}
					1 & 0 & 0\\
					0 & 0 & 0\\
					0 & 0 & 0
				\end{bmatrix}, 
				\begin{bmatrix}
					0 & 0 & 1\\
					0 & 1 & 0\\
					1 & 0 & 0
				\end{bmatrix}, 
				\begin{bmatrix}
					0 & a & 0\\
					a & 1 & 0\\
					0 & 0 & 0
				\end{bmatrix}\right)
			\]
			as a basis of $W$. Moving $W$ by the action of 
			\[
				\begin{bmatrix}
					1 & 0 & 0\\
					-a & 1 & 0\\
					0 & a & 1
				\end{bmatrix},
			\]
			we may assume $a = 0$. This is the case (\Rnum{3}) in Table \ref{3table}.
		\item[(\rnum{4})] Assume that $\disc[W]$ is the union of distinct three lines whose intersection is not one point. 
			Since there exists a line $H$ in $\Proj^2$ 
			such that the intersection of $H$ and $\disc[W]$ consists of distinct three points,  
			we may assume that $W$ contains a 2-dimensional subspace of the case $(111)$ in Table \ref{2table}. 
			Therefore, we can take 
			\[
				\left(\begin{bmatrix}
					0 & 0 & 0\\
					0 & 1 & 0\\
					0 & 0 & 1
				\end{bmatrix}, 
				\begin{bmatrix}
					1 & 0 & 0\\
					0 & 0 & 0\\
					0 & 0 & 1
				\end{bmatrix}, 
				\begin{bmatrix}
					a & b & c\\
					b & 0 & d\\
					c & d & 0
				\end{bmatrix}\right)
			\]
			as a basis of $W$. 
			Let $G = x^2 + axz  - (b^2 + d^2)z^2$ and $J = z((2bcd - ad^2)z^2 - (b^2 + c^2)xz + ax^2)$. 
			Then the defining polynomial of $\disc[W]$ is $F = xy^2 + Gy + J$. 
			Since $F$ is reducible, we have $F = (xy + K)(y + L)$ 
			for some quadratic polynomial $K$ and some linear polynomial $L$ in $k[x,z]$. 
			Now that $F$ is a product of three linear polynomials, $K$ must be divided by $x$. 
			Therefore, $G$ and $J$ must be divided by $x$. 
			Thus we see that $b^2 + d^2 = 0$ and $d(2bc - ad) = 0$. 
			\begin{itemize}
				\item Suppose $d = 0$. Then we have $b = 0$ and $F = x(y^2 + (x + az)y + z(-c^2z + ax))$. 
					Since $F$ is a product of three linear polynomials, there exists $r \in k^{\times}$ such that
					\[
						rz - \frac{c^2}{r}z + \frac{a}{r}x = x + az.
					\]
					Therefore, $c = 0$. This is the first case of (\Rnum{4}) in Table \ref{3table}.
				\item Suppose $d \neq 0$. Then we have $2bc - ad = 0$, $b^2 + d^2 = 0$ and 
					$F = x(y^2 + (x + az)y + z(- (b^2 + c^2)z + ax))$. Since $F$ is a product 
					of three linear polynomials, 
					there exists $r \in k^{\times}$ such that
					\[
						rz + \frac{- (b^2 +c^2)}{r}z + \frac{a}{r}x = x +az.
					\]
					Therefore, we see that $b^2 + c^2 = 0$. Thus we can take 
					\begin{align}
						&\left(\begin{bmatrix}
							0 & 0 & 0\\
							0 & 1 & 0\\
							0 & 0 & 1
						\end{bmatrix}, 
						\begin{bmatrix}
							1 & 0 & 0\\
							0 & 0 & 0\\
							0 & 0 & 1
						\end{bmatrix}, 
						\begin{bmatrix}
							2i & i & 1\\
							i & 0 & 1\\
							1 & 1 & 0
						\end{bmatrix}\right), \notag \\
						&\left(\begin{bmatrix}
							0 & 0 & 0\\
							0 & 1 & 0\\
							0 & 0 & 1
						\end{bmatrix}, 
						\begin{bmatrix}
							1 & 0 & 0\\
							0 & 0 & 0\\
							0 & 0 & 1
						\end{bmatrix}, 
						\begin{bmatrix}
							2i & -i & 1\\
							-i & 0 & -1\\
							1 & -1 & 0
						\end{bmatrix}\right), \label{-i}\\
						&\left(\begin{bmatrix}
							0 & 0 & 0\\
							0 & 1 & 0\\
							0 & 0 & 1
						\end{bmatrix}, 
						\begin{bmatrix}
							1 & 0 & 0\\
							0 & 0 & 0\\
							0 & 0 & 1
						\end{bmatrix}, 
						\begin{bmatrix}
							2i & i & -1\\
							i & 0 & -1\\
							-1 & -1 & 0
						\end{bmatrix}\right), \notag \\ \text{or }
						&\left(\begin{bmatrix}
							0 & 0 & 0\\
							0 & 1 & 0\\
							0 & 0 & 1
						\end{bmatrix}, 
						\begin{bmatrix}
							1 & 0 & 0\\
							0 & 0 & 0\\
							0 & 0 & 1
						\end{bmatrix}, 
						\begin{bmatrix}
							2i & -i & -1\\
							-i & 0 & 1\\
							-1 & 1 & 0
						\end{bmatrix}\right)\notag
					\end{align}
					as a basis of $W$, where $i$ is a root of $t^2 + 1$. 
					In fact, all of them are in the second orbit of 
					(\Rnum{4}) in Table \ref{3table}. For example, if we move (\ref{-i}) by
					\[
						\begin{bmatrix}
							1 & 0 & 0\\
							0 & -1 & 0\\
							0 & 0 & i
						\end{bmatrix},
					\]
					it turns into the second representative of (\Rnum{4}) in Table \ref{3table}. 
			\end{itemize}
		\item[(\rnum{5})] Suppose that disc$[W]$ is the union of a quadratic curve and a line 
		whose intersection consists of distinct two points. 
			As in the case (\rnum{2}), we can take 
			\[
				\left(\begin{bmatrix}
					1 & 0 & 0\\
					0 & 0 & 0\\
					0 & 0 & 0
				\end{bmatrix}, 
				\begin{bmatrix}
					0 & 0 & 1\\
					0 & 1 & 0\\
					1 & 0 & 0
				\end{bmatrix}, 
				\begin{bmatrix}
					0 & a & 0\\
					a & b & c\\
					0 & c & d
				\end{bmatrix}\right)
			\]
			as a basis of $W$, where $(a,b,c,d) \neq (0,0,0,0)$. 
			Let $G = dy + (bd - c^2)z$ and $J = y^3 + by^2z - 2acyz^2 + a^2dz^3$. Then 
			the defining polynomial of $\disc[W]$ is $F = xzG - J$. 
			Since $F$ is reducible, we have $zG$ and $J$ have a common factor if $G$ is not 0.
			If the polynomial $G$ were 0, the discriminant scheme became a union of three lines.
			Thus we may assume that $J$ is divided by $G$. 
			Therefore, we may assume $d = 1$ and $a = c(c^2 - b)$. Moving $W$ by the action of 
			\[
				\begin{bmatrix}
					0 & 0 & 1\\
					0 & 1 & c\\
					1 & -c & b - c^2
				\end{bmatrix},
			\]
			we can verify that this space turns into the representative of (\Rnum{5}) in Table \ref{3table}.
		\item[(\rnum{6})] Assume that disc$[W]$ is the union of a quadratic curve and a tangent line. 
			As in the case (\rnum{4}), 
			we can take 
			\[
				\left(\begin{bmatrix}
					0 & 0 & 0\\
					0 & 1 & 0\\
					0 & 0 & 1
				\end{bmatrix}, 
				\begin{bmatrix}
					1 & 0 & 0\\
					0 & 0 & 0\\
					0 & 0 & 1
				\end{bmatrix}, 
				\begin{bmatrix}
					a & b & c\\
					b & 0 & d\\
					c & d & 0
				\end{bmatrix}\right)
			\]
			as a basis of $W$, where $(a,b,c,d) \neq (0,0,0,0)$. 
			Let $G = y + az$, $J = (b^2 + d^2)y + d(ad - 2bc)z$ and $K = y^2 + ayz - (b^2 + c^2)z^2$. 
			Then the defining polynomial of $\disc[W]$ is $F = Gx^2 + Kx - Jz^2$. 
			Since $F$ must be divided by a linear polynomial, 
			$J$ and $K$ are divided by $G$ or there exists $r \in k^{\times}$ such that $GJ/r - rz^2 = K$. 
			\begin{itemize}
				\item Suppose that $J$ and $K$ are divided by $G$. Then we see that
					\begin{align}
						0 &= b^2 + c^2 \label{bc}\\
						0 &= c(ac - 2bd) \label{ac-2bd}\\
						F &= (y + az)(x^2 + xy - (b^2 + d^2)z^2)\notag.
					\end{align}
					Since a line $y + az = 0$ must be tangent to $x^2 + xy - (b^2 + d^2)z^2 = 0$, we have 
					\begin{align}
						a^2 + 4(b^2 + d^2) = 0. \label{abd}
					\end{align}
					From (\ref{ac-2bd}), we have $a^2c^4 = 4b^2c^2d^2$. 
					Moreover, we have $c^4(a^2 + 4d^2)  = 0$ from (\ref{bc}). 
					From (\ref{abd}), we see that $bc = 0$. Therefore, we obtain $b = c = 0$ from (\ref{bc}) 
					and $a^2  + 4d^2 = 0$ from (\ref{abd}). 
					Thus $W$ is equal to 
					\[
						\left(\begin{bmatrix}
							0 & 0 & 0\\
							0 & 1 & 0\\
							0 & 0 & 1
						\end{bmatrix}, 
						\begin{bmatrix}
							1 & 0 & 0\\
							0 & 0 & 0\\
							0 & 0 & 1
						\end{bmatrix}, 
						\begin{bmatrix}
							2 & 0 & 0\\
							0 & 0 & i\\
							0 & i & 0
						\end{bmatrix}\right),
					\]
					where $i$ is a root of $t^2 + 1$. Moving $W$ by the action of 
					\[
						\begin{bmatrix}
							0 & 0 & i\\
							0 & 1 & 0\\
							-i & 0 & 0
						\end{bmatrix},
					\]
					it turns into the representative of (\Rnum{6}) in Table \ref{3table}.	
				\item Suppose that there exists $r \in k^{\times}$ such that $GJ/r - rz^2 = K$. Then we see that
					\begin{align}
						r &= b^2 + d^2 \notag \\ 
						0 &= d(ad - 2bc) \label{ad-2bc}\\
						0 &= c^2 - d^2 \label{cd}\\
						F &= ((y + az)x - (b^2 + d^2)z^2)(x + y).\notag
					\end{align}
					Since $x + y = 0$ must be tangent to $(y + az)x - (b^2 + d^2)z^2 = 0$, we have
					\begin{align}
						a^2 = 4(b^2 + d^2). \label{adb}
					\end{align}
					From (\ref{ad-2bc}), we have $a^2d^4 = 4b^2c^2d^2$. 
					Moreover, we have $d^4(a^2 - 4b^2)  = 0$ from (\ref{cd}). 
					From (\ref{adb}), we see that $d = 0$. Therefore, we obtain $d = c = 0$ from (\ref{cd}) 
					and $a^2  - 4b^2 = 0$ from (\ref{adb}). 
					Thus $W$ is equal to either
					\[
						\left(\begin{bmatrix}
							0 & 0 & 0\\
							0 & 1 & 0\\
							0 & 0 & 1
						\end{bmatrix}, 
						\begin{bmatrix}
							1 & 0 & 0\\
							0 & 0 & 0\\
							0 & 0 & 1
						\end{bmatrix}, 
						\begin{bmatrix}
							2 & 1 & 0\\
							1 & 0 & 0\\
							0 & 0 & 0
						\end{bmatrix}\right)
					\]
					or 
					\[
						\left(\begin{bmatrix}
							0 & 0 & 0\\
							0 & 1 & 0\\
							0 & 0 & 1
						\end{bmatrix}, 
						\begin{bmatrix}
							1 & 0 & 0\\
							0 & 0 & 0\\
							0 & 0 & 1
						\end{bmatrix}, 
						\begin{bmatrix}
							2 & -1 & 0\\
							-1 & 0 & 0\\
							0 & 0 & 0
						\end{bmatrix}\right).
					\] Moving the second by the action of 
					\[
						\begin{bmatrix}
							1 & 0 & 0\\
							0 & -1 & 0\\
							0 & 0 & 1
						\end{bmatrix},
					\]
					it turns into the representative of (\Rnum{6}) in Table \ref{3table}.	
			\end{itemize}
		\item[(\rnum{7})] Suppose that $\disc[W]$ is a cubic curve with a cusp. 
			As in the case (\rnum{2}), 
			we can take 
			\[
				\left(\begin{bmatrix}
					1 & 0 & 0\\
					0 & 0 & 0\\
					0 & 0 & 0
				\end{bmatrix}, 
				\begin{bmatrix}
					0 & 0 & 1\\
					0 & 1 & 0\\
					1 & 0 & 0
				\end{bmatrix}, 
				\begin{bmatrix}
					0 & a & 0\\
					a & b & c\\
					0 & c & d
				\end{bmatrix}\right)
			\]
			as a basis of $W$, where $(a,b,c,d) \neq (0,0,0,0)$. 
			Then the defining polynomial of $\disc[W]$ is 
			$F = - y^3 - b^2y^2z + 2acyz^2 + dxyz + (bd - c^2)xz^2 - a^2dz^3$. 
			We can verify that $[1 : 0 : 0]$ is a singular point, so this is the cusp 
			and we see that $d$ must be zero and $c \neq 0$. We may assume $c = 1$. 
			Thus $W$ is equal to 
			\[
				\left(\begin{bmatrix}
					1 & 0 & 0\\
					0 & 0 & 0\\
					0 & 0 & 0
				\end{bmatrix}, 
				\begin{bmatrix}
					0 & 0 & 1\\
					0 & 1 & 0\\
					1 & 0 & 0
				\end{bmatrix}, 
				\begin{bmatrix}
					0 & a & 0\\
					a & b & 1\\
					0 & 1& 0
				\end{bmatrix}\right).
			\]
			Moving $W$ by the action of 
			\[
				\begin{bmatrix}
					1 & 0 & 0\\
					\frac{b}{3} & 1 & 0\\
					\frac{-2b^2}{9} - a & \frac{ - b}{3} & 1
				\end{bmatrix},
			\]
			we see that it is in the orbit of (\Rnum{7}) in Table \ref{3table}.
		\item[(\rnum{8})] Suppose that $\disc[W]$ is a cubic curve with a node. 
			As in the case (\rnum{2}), 
			we can take 
			\[
				\left(\begin{bmatrix}
					1 & 0 & 0\\
					0 & 0 & 0\\
					0 & 0 & 0
				\end{bmatrix}, 
				\begin{bmatrix}
					0 & 0 & 1\\
					0 & 1 & 0\\
					1 & 0 & 0
				\end{bmatrix}, 
				\begin{bmatrix}
					0 & a & 0\\
					a & b & c\\
					0 & c & d
				\end{bmatrix}\right)
			\]
			as a basis of $W$, where $(a,b,c,d) \neq (0,0,0,0)$. 
			Then the defining polynomial $F$ of $\disc[W]$ 
			is $- y^3 - b^2y^2z + 2acyz^2 + dxyz + (bd - c^2)xz^2 - a^2dz^3$. 
			We can verify that $[1 : 0 : 0]$ is a singular point, so this is the node 
			and we see that $d \neq 0$. We may assume $d = 1$. Thus $W$ is equal to 
			\[
				\left(\begin{bmatrix}
					1 & 0 & 0\\
					0 & 0 & 0\\
					0 & 0 & 0
				\end{bmatrix}, 
				\begin{bmatrix}
					0 & 0 & 1\\
					0 & 1 & 0\\
					1 & 0 & 0
				\end{bmatrix}, 
				\begin{bmatrix}
					0 & a & 0\\
					a & b & c\\
					0 & c & 1
				\end{bmatrix}\right).
			\]
			Moving $W$ by the action of 
			\[
				\begin{bmatrix}
					1 & 0 & 0\\
					c & 1 & 0\\
					b - 2c^2 & - c & 1
				\end{bmatrix},
			\]
			we may assume $b = c = 0$. Moving $W$ by the action of 
			\[
				\begin{bmatrix}
					1 & 0 & 0\\
					0 & 0 & t\\
					0 & t^2 & 0
				\end{bmatrix}
			\]
			for some $t\in k$ such that $t^3 = a$, we may assume $a = 0$. 
			This is the case (\Rnum{8}) in Table \ref{3table}.
		\item[(\rnum{9})] Suppose that $\disc[W]$ is the union of three lines which intersect at one point. 
			As in the case (\rnum{2}), 
			we can take 
			\[
				\left(\begin{bmatrix}
					1 & 0 & 0\\
					0 & 0 & 0\\
					0 & 0 & 0
				\end{bmatrix}, 
				\begin{bmatrix}
					0 & 0 & 1\\
					0 & 1 & 0\\
					1 & 0 & 0
				\end{bmatrix}, 
				\begin{bmatrix}
					0 & a & 0\\
					a & b & 0\\
					0 & 0 & 0
				\end{bmatrix}\right)
			\]
			as a basis of $W$, where $(a,b) \neq (0,0)$.  Then the defining polynomial of $\disc[W]$ is 
			$F = - by^2z - y^3$. This contradicts the assumption that $F$ must be written as a product of three distinct 
			linear polynomials.\qedhere
 	\end{description}
\end{proof}
\begin{acknowledgement}
The author is very grateful to his advisor Yoichi Mieda for several helpful suggestions and warmful encouragements during this work.
\end{acknowledgement}
\bibliographystyle{my_amsalpha}
\bibliography{reference}

\end{document}